\titleformat{\section}[hang]%
{\bfseries\large}{\thesection.}{1ex}{}%
\titleformat{\subsection}[hang]%
{\bfseries}{\thesubsection}{1ex}{}%
\theoremstyle{plain}
\newtheorem{theorem}{Theorem}[section]
\newtheorem{proposition}[theorem]{Proposition}
\newtheorem{definition}[theorem]{Definition} 
\theoremstyle{definition}
\newtheorem{remark}[theorem]{Remark}
\newcommand{\pullbackmark}[2]{\save ;p+<.8pc,0pc>:(0,-1)::%
(#1) *{\phantom{Z}} %
;p+(#2)-(0,0) **@{-}%
;p-(#1)+(0,0) *{\phantom{Z}} **@{-} \restore}
\begin{document}

\title{Span equivalence between algebras for $n$-globular operads}
\author{Yuya Nishimura}
\maketitle



\begin{minipage}{118mm}{\small
{\bf Abstract.} We define a new equivalence between algebras for $n$-globular operads which is suggested in [Cottrell 2015], and show that it is a generalization of ordinary equivalence between categories.\\
{\bf Keywords.} Algebras for $n$-globular operads, Span equivalence.\\
{\bf Mathematics Subject Classification (2010).} 18A22, 18C20, 18D50
}\end{minipage}


\section{Introduction}
In [Cottrell 2015], Thomas Cottrell  defined an equivalence of $K$-algebras on an $n$-globular set to show the following coherence theorem:
\begin{theorem}
Let $K$ be an $n$-globular operad with unbiased contraction $\gamma$, and let $X$ be $n$-globular set.
Then the free $K$-algebra on $X$ is equivalent to the free strict $n$-category on $X$.
\end{theorem}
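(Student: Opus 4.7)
The plan is to exhibit the canonical comparison map $\pi_X : KX \to TX$ as the required equivalence, where $T$ denotes the free strict $n$-category monad. Since $T$ is terminal among $n$-globular operads, there is a unique operad morphism $u : K \to T$, and this induces $\pi_X$ as a $K$-algebra morphism once $TX$ is viewed as a $K$-algebra by restriction of scalars along $u$. My first task would be to make this construction precise and to check that $\pi_X$ is strict as a morphism of underlying $n$-globular sets.

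I would then build the reverse data dimension by dimension. In dimension $0$ both $KX$ and $TX$ reduce to the set of $0$-cells of $X$, so $\pi_{X,0}$ is an identity. Inductively, for any $k$-cell $c$ of $TX$ there is a lift $\tilde c$ in $KX$ with $\pi_X(\tilde c) = c$, because the component of $u$ in arity $k$ hits every pasting diagram; given two parallel lifts $\tilde c, \tilde c'$ of the same $c$, the contraction $\gamma$ supplies a $(k+1)$-cell in $K$ connecting them, which becomes a $(k+1)$-cell in $KX$ after evaluation on cells of $X$. The unbiased part of $\gamma$ additionally supplies coherence cells witnessing preservation of identities and compatibility with operadic composition, so that the lifts assemble into the higher-dimensional equivalence data demanded by the new definition.

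The main technical obstacle is the interaction between $\gamma$ and the $K$-algebra action on $KX$: the coherence cells must arise from applying $\gamma$ to genuine operations in $K$ rather than being chosen freely, otherwise they will not compose correctly under the algebra structure. This reduces to a careful bookkeeping argument exploiting the compatibility of $\gamma$ with the operadic multiplication of $K$, together with the naturality of the various contraction cells across the pasting-diagram stratification of $TX$. In the top dimension $n$ there are no higher cells available, so the putative coherence cells collapse to equalities; here I would verify that the inductive construction forces the $n$-dimensional component of $\pi_X$ to be bijective on fibres, closing the induction and completing the proof that $\pi_X$ exhibits the desired equivalence.
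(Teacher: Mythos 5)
First, a caveat: the paper does not actually prove this statement. Theorem 1.1 is quoted from [Cottrell 2015] as motivation for the notion of equivalence in Definition 1.2, and the present paper only remarks afterwards that the same conclusion holds trivially for its span equivalence. Measured against Cottrell's own argument, your opening move is exactly right: $T$, realized as the terminal $n$-globular operad $\mathrm{id}\colon T1\to T1$, receives a unique operad map from $K$, and the induced projection $\pi_X\colon KX\to TX$ (concretely $TX\times_{T1}K\to TX$) is a strict map of $K$-algebras once $TX$ carries the restricted $K$-action; the unbiased contraction $\gamma$ is indeed what makes this map full at every level and faithful at level $n$.

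The genuine gap is in how you read the conclusion. The equivalence asserted is the one in Definition 1.2: a \emph{single} map of $K$-algebras that is surjective on $0$-cells, full on $m$-cells for $1\le m\le n$, and faithful on $n$-cells. No reverse data, no ``coherence cells witnessing preservation of identities,'' and no compatibility of chosen contraction cells with the algebra action need to be constructed; your second and third paragraphs build, and then worry about, data the statement does not require, so the ``main technical obstacle'' you identify does not arise. Conversely, the one thing that does need care is misstated: fullness is not ``every $k$-cell of $TX$ has a lift whose parallel lifts are connected by a $(k+1)$-cell,'' but rather that a cell $\beta\in(TX)_m$ between the images of \emph{prescribed} cells $(\sigma,a),(\tau,b)\in(KX)_{m-1}$ lifts with exactly that source and target; this is precisely what $\gamma$ provides, applied to $a,b\in K_{m-1}$ over the underlying pasting diagram of $\beta$ in $(T1)_m$, and faithfulness on $n$-cells is the degenerate top-dimensional clause of the unbiased contraction forcing parallel $n$-cells of $K$ over the same pasting diagram to coincide. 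Finally, $(KX)_0\cong X_0\times K_0$ need not map to $(TX)_0=X_0$ by an identity, since $K_0$ may have more than one element; only surjectivity (via the operad unit) holds, which is all the definition asks for.
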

\noindent
His equivalence in this theorem is as follows:
\begin{definition}
Let $K$ be an $n$-globular operad. $K$-algebras $KX \rightarrow X$ and $KY \rightarrow Y$ are equivalent if there exists a map of $K$-algebras $u:X \rightarrow Y$ or $u:Y \rightarrow X$ such that $u$ is surjective on $0$-cells, full on $m$-cells for all $1 \leq m \leq n$, and faithful on $n$.
\end{definition}
\noindent
But, as he said, this equivalence is not the best one: ``This definition of equivalence is much more (and thus much less general) than ought to be.'' To improve it, he suggested two approaches. The one of them is to replace the map $u$ with a span of maps of $K$-algebras. In this paper, we adopt this approach and prove two theorem. The first is that we define an adequate equivalence using spans
and prove this is indeed an equivalence relation. The second is that our equivalence is a generalization of ordinary equivalence between categories.

In Section 2 we recall the preliminary definitions (globular sets, thier maps, operads, algebras for a operad). In Section 3 we define the notion of \emph{span equivalence in $K \mathchar`- {\bf Alg}$} and prove the first theorem. In Section 4, for ordinary categories, we define \emph{span equivalence in ${\bf Cat}$} independently. Then we show that two categories are ordinary equivalent if and only if they are span equivalent in ${\bf Cat}$. To prove this, we use a combinatorial construction named \emph{equivalence fusion}. Futhermore, we show the second theorem.

\section{Preliminary}
The contents of the section is in [Cottrell 2015].

\begin{definition}
Let $n \in \mathbb{N}$. An {\em$n$-globular set} is a diagram 
\[
\xymatrix{
X = ( X_n \ar@<0.5ex>[r]^{s_n^X} \ar@<-0.5ex>[r]_{t_n^X} & X_{n-1} \ar@<0.5ex>[r]^{s_{n-1}^X} \ar@<-0.5ex>[r]_{t_{n-1}^X} 
 & ... \ar@<0.5ex>[r]^{s_1^X} \ar@<-0.5ex>[r]_{t_1^X} & X_{0}  )
 }
\]
of sets and maps such that
\[
s_{k-1}^X s_{k}^X (x) =s_{k-1}^X t_{k}^X (x) , \hspace{10pt} t_{k-1}^X s_{k}^X (x) = t_{k-1}^X t_{k}^X (x)
\]
for all $k \in \{ 2,...,n \} $ and $x \in X_k$. \\
Elements of $X_k$ are called {\em$k$-cells} of $X$. We defined {\em hom-sets} of $X$ as follows:
\[
{\bf Hom}_{X}(x,y) := \{ \alpha \in X_k \mid s_k^X(\alpha)=x, t_k^X(\alpha)=y \}
\] 
for all $k \in \{ 1,...n \} $ and $x,y \in X_{k-1}$.\\
Let $X,Y$ be $n$-globular sets, A \emph{map} of $n$-globular sets from $X$ to $Y$ is a collection $f=\{ f_{k}: X_{k} \rightarrow Y_{k} \}_{k \in \{ 1,...,n \}}$ of maps of sets such that 
\[
s_k^Y f_k (x) = f_{k-1} s_{k}^X (x) , \hspace{10pt} t_{k}^Y f_k (x) = f_{k-1} t_{k}^X (x)
\]
for all $k \in \{ 1,...,n \} $ and $x \in X_{k}$. \\
The category of $n$-globular sets and their maps is denoted by $n \mathchar`- {\bf GSet}$.
\end{definition}

\begin{definition}
A category is {\em cartesian} if it has all pullbacks. A functor is {\em cartesian} if it preserves pullbacks. A natural transformation is {\em cartesian} if it all of its naturality squares are pullbacks squares. A monad is {\em cartesian} if its functor part, unit and counit are cartesian. A map of monad is {\em cartesian} if its underlying natural transformation is cartesian.
\end{definition}

\begin{definition}
Let ${\cal C}$ be a cartesian category with a terminal object $1$. and $T$ be a cartesian monad on ${\cal C}$. The category of {\em $T$-collections} is the slice category ${\cal C}/T1$. The category has a monoidal structure: let $k: K \rightarrow T1, k': K' \rightarrow T1$ be collections; then their tensor product is defined to be the composite along the top of the diagram
\[ \xymatrix{
K \otimes K' \ar[r] \ar[d] \pullbackmark{0,1.5}{1.5,0} & TK' \ar[r]^{Tk'} \ar[d]^{T!} & T^2 1 \ar[r]^{\mu^T_1} & T1 \\
K \ar[r]_k & T1 & &
} \]
where $!$ is the unique map $K' \rightarrow 1$. the unit for this  tensor product is the collection
\[ \xymatrix{
1 \ar[d]^{\eta^T_1} \\
T1
} \]
The monoidal category is denoted by $T \mathchar`- {\bf Coll}$. 
\end{definition}

\begin{definition}
Let ${\cal C}$ be a cartesian category with a terminal object $1$, and $T$ be a cartesian monad on ${\cal C}$. A \emph{$T$-operad} is a monoid  in the monoidal category $T$-${\bf Coll}$. In the case in which $T$ is the free strict $n$-category monad on $n \mathchar`- {\bf GSet}$, a $T$-operad is called an \emph{$n$-globular operad}.
\end{definition}

\begin{definition}
Let ${\cal C}$ be a cartesian category with a terminal object $1$, $T$ be a cartesian monad on ${\cal C}$ and $K$ be a $T$-operad. Then there is an {\em induced monad} on ${\cal C}$, which by abuse of notation we denote $(K, \eta^K, \mu^K)$: The endfunctor
\[
K: {\cal C} \rightarrow {\cal C}
\]
is defined as follows; The object part of the functor, for $X \in {\cal C}$, $KX$ is defined by the pullback:
\[ \xymatrix{
KX \ar[r]^{K!} \ar[d]_{k_X} \pullbackmark{0,1.5}{1.5,0} & K \ar[d]^k \\
TX \ar[r]_{T!} & T1
} \]
The arrow part of the functor, for $Y \in {\cal C}, u: X \rightarrow Y$, $Ku$ is defined by the unique property of the pullback: \vspace{5pt}
\[ \xymatrix{
KX \ar@{.>}[r]^{Ku} \ar[d]_{k_X} \pullbackmark{0,1.5}{1.5,0} \ar@/^25pt/[rr]^{K!} & KY \ar[r]^{K!} \ar[d]_{K_Y} \pullbackmark{0,1.5}{1.5,0} & K \ar[d]^{k} \\
TX \ar[r]_{Tu} \ar@/_25pt/[rr]_{T!} & TY \ar[r]_{T!} & T1
} \] \vspace{10pt} \\
Components  $\eta^K_X, \mu^K_X$of the unit map $\eta^K: 1 \Rightarrow K$ and $\mu^K:K^2 \Rightarrow K$ are defined by the following diagrams: 
\[ \xymatrix{
X \ar@{.>}[rd]^{\eta^K_X} \ar[rr]^{!} \ar@/_10pt/[rdd]_{\eta^T_X} & & 1 \ar[d]^{\epsilon} \ar@/^20pt/[dd]^{\eta^T_1} \\
& KX \ar[r]^{K!} \ar[d]_{k_X} \pullbackmark{0,1.5}{1.5,0} & K \ar[d]^k \\
& TX \ar[r]_{T!} & T1
} \]
\[ \xymatrix@dr{
K^2 X \ar[r] \ar[d] \pullbackmark{1,1}{-1,1} \ar@{.>}@/^20pt/[rrdd]^{\mu^K_X} & TKX \ar[r] \ar[d] \pullbackmark{1,1}{-1,1} & T^2 X \ar[d]^{T^2 !} \ar@/^20pt/[rdd]^{\mu^T_X} & \\
K \otimes K \ar[r] \ar[d] \pullbackmark{1,1}{-1,1} \ar@/_70pt/[rrdd]_{\mu^K} & TK \ar[r]_{Tk} \ar[d]^{T!} & T^2 1 & \\
K \ar[r]_k & T1 & KX \ar[r] \ar[d] \pullbackmark{1,1}{-1,1} & TX \ar[d]^{T!} \\
& & K \ar[r]_{k} & T1 
} \]
\end{definition}

\begin{definition}
Let ${\cal C}$ be a cartesian category with a terminal object $1$, $T$ be a cartesian monad on ${\cal C}$ and $K$ be a $T$-operad. We define a \emph{$K$-algebra} as an algebra for the induced monad $(K, \eta^K , \mu^K )$. Similarly, a \emph{map} of algebras for $T$-operad $K$ is a map of algebras for the induced monad. The category of $K$-algebras and thier maps is denoted by $K \mathchar`- {\bf Alg}$.
\end{definition}
\noindent
Leinster's weak $n$-category is an algebra for specific operad. See section 9 and 10 in [Leinster 2004] for details.

\section{Span equivalence}

\begin{definition}
Let $f:X \rightarrow Y$ be a map of $n$-globular sets.
\begin{list}{$\bullet$}{}
\item $f$ is \emph{surjective on $k$-cells} $ : \Leftrightarrow$ $f_k : X_k \rightarrow Y_k $ is surjective
\item $f$ is \emph{injective on $k$-cells} $ : \Leftrightarrow$ $f_k : X_k \rightarrow Y_k $ is injective
\item $f$ is \emph{full on $k$-cells} $ : \Leftrightarrow$ 
$\left\{ \begin{array}{l} 
\forall x, x' \in X_{k-1}, \beta \in {\bf Hom}_{Y}(f_{k-1}(x), f_{k-1}(x')), \\
\exists \alpha \in {\bf Hom}_{X}(x,x') \hspace{5pt} {\rm s.t.} \hspace{5pt} f_{k}( \alpha )=\beta \\
\end{array} \right.$
\item $f$ is {\em faithful on $k$-cell} $ : \Leftrightarrow$ 
$\left\{ \begin{array}{l}{}
\forall x,x' \in X_{k-1}, \alpha ,\alpha ' \in {\bf Hom}_{X}(f_{k-1}(x),f_{k-1}(x')), \\
\alpha \neq \alpha ' \Rightarrow f_{k}(\alpha) \neq f_{k}(\alpha ' )
\end{array} \right.$
\end{list}
Let $f$ be a map of $K$-algebras. $f$ is \emph{surjective} (respectively, \emph{injective}, \emph{full}, \emph{faithful}) on $k$-cells if and only if the underlying map is surjective (respectively, injective, full, faithful) on $k$-cells.
\end{definition}

\begin{definition}
Let $K$ be an $n$-globular operad. $K$-algebras $\phi : KX \rightarrow X$ and $\psi : KY \rightarrow Y$ are {\em span equivalent in $K \mathchar`- {\bf Alg}$} if there exists a triple $\langle \theta , u, v \rangle$ such that $\theta : KZ \rightarrow Z$ is an $K$-algebra, $u: \theta \rightarrow \phi $ and $v: \theta \rightarrow \psi $ are maps of $K$-algebras, surjective on $0$-cells, full on $m$-cells for all $1 \leq m \leq n$, and faithful on $n$-cells. The triple $\langle \theta , u, v \rangle$ is referred to as an span equivalence of $K$-algebras.
\end{definition}
\noindent
Trivially, under the same situation as Theorem 1.1, the free $K$-algebra on $X$ is span equivalent to the free strict $n$-category on $X$. 

\begin{proposition}
In the pullback diagram in $n \mathchar`- {\bf GSet}$
\[ \xymatrix{
 P \pullbackmark{1,0}{0,1} \ar[r]^j \ar[d]_i & Y \ar[d]^g \\
 X \ar[r]_f & S
} \]
\begin{list}{$\bullet$}{}
\item $f$ is surjective on $0$-cells $\Rightarrow$ $j$ is surjective on $0$-cells
\item $f$ is full on $k$-cells $\Rightarrow$ $j$ is full on $k$-cells
\item $f$ is faithful on $k$-cells $\Rightarrow$ $j$ is faithful on $k$-cells
\end{list}
\end{proposition}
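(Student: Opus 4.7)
The plan is to exploit the explicit description of pullbacks in $n \mathchar`- {\bf GSet}$: they are computed levelwise, so $P_k = \{(x,y) \in X_k \times Y_k \mid f_k(x) = g_k(y)\}$ with source, target, $i$, $j$ all acting componentwise. Each of the three bullets then reduces to a short diagram chase against the appropriate property of $f$.

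For \emph{surjectivity on $0$-cells}, I take $y \in Y_0$, apply surjectivity of $f_0$ to $g_0(y) \in S_0$ to obtain some $x \in X_0$ with $f_0(x) = g_0(y)$, observe that $(x,y) \in P_0$, and note $j_0(x,y) = y$.

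For \emph{fullness on $k$-cells}, I start with cells $(x,y), (x',y') \in P_{k-1}$ and $\beta \in {\bf Hom}_Y(y, y')$. The key observation is that $g_k(\beta)$ lives in ${\bf Hom}_S(f_{k-1}(x), f_{k-1}(x'))$, since $s_{k}^{S} g_k(\beta) = g_{k-1} s_k^Y(\beta) = g_{k-1}(y) = f_{k-1}(x)$ by the pullback condition on $(x,y)$, and similarly for targets. Applying fullness of $f$ on $k$-cells produces $\alpha \in {\bf Hom}_X(x, x')$ with $f_k(\alpha) = g_k(\beta)$; then the pair $(\alpha, \beta)$ lies in $P_k$, sits in ${\bf Hom}_P((x,y),(x',y'))$, and satisfies $j_k(\alpha,\beta) = \beta$.

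For \emph{faithfulness on $k$-cells}, I take two parallel cells $(\alpha_1,\beta_1), (\alpha_2,\beta_2) \in {\bf Hom}_P((x,y),(x',y'))$ with $j_k(\alpha_1,\beta_1) = j_k(\alpha_2,\beta_2)$, i.e., $\beta_1 = \beta_2$. Then $f_k(\alpha_1) = g_k(\beta_1) = g_k(\beta_2) = f_k(\alpha_2)$, and since the $\alpha_i$ are parallel in $X$ (both go from $x$ to $x'$), faithfulness of $f$ on $k$-cells forces $\alpha_1 = \alpha_2$, hence the pairs agree.

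No step here is really an obstacle; the only point requiring care is bookkeeping with the levelwise source and target maps so that the parallelism hypotheses of ``full'' and ``faithful'' are actually met before $f$'s property is invoked. This is immediate from the pullback condition $f_{k-1}(x) = g_{k-1}(y)$ on cells of $P_{k-1}$.
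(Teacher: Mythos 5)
Your proposal is correct and follows essentially the same route as the paper: both replace the abstract pullback by the explicit levelwise model $P_k = \{(x,y) \mid f_k(x)=g_k(y)\}$ and then run the same three diagram chases, including the key step of using $f_{k-1}(x)=g_{k-1}(y)$ to verify that $g_k(\beta)$ is parallel in the right hom-set before invoking fullness, and projecting to the $X$-component to conclude faithfulness. Nothing to add.
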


\begin{proof} We define an $n$-globular set $P$ as follows:
\[ P_k := \{ (x,y) \in X_k \times Y_k \mid f_k(x)=g_k(y) \}  \]
\[ s_l^P := ( P_l \ni (x,y) \mapsto (s_l^X (x), s_l^Y (y) ) \in P_{l-1} ) \]
\[ t_l^P := ( P_l \ni (x,y) \mapsto (t_l^X (x), t_l^Y (y) ) \in P_{l-1} ) \]
for all $k \in \{ 0,...,n \}, l \in \{ 1,...,n \}$, and maps of $n$-globular sets $i,j$ as follows:
\[
i_k := (P_k \ni (x,y) \mapsto x \in X_k), \hspace{10pt} j_k := (P_k \ni (x,y) \mapsto y \in Y_k)
\]
for all $k \in \{ 0,...,n \}$. Then $(P,i,j)$ is a pullback of $X$ and $Y$ over $S$. It is enough to prove the proposition that we check the claims for $(P,i,j)$.
Firstly, we prove surjectivity on $0$-cells. For $y \in Y_0$, there exists $x \in X_0$ such that $f_{0}(x)=g_{0}(y)$, So $(x,y) \in P_{0}$ and $j_{0}((x,y))=y$. which is the condition of surjectivity.
To show fullness, we suppose $(x,y), (x',y') \in P_{k-1}, \phi \in {\bf Hom}(y,y')$, we can see $s_{k} g_{k}(\phi)=g_{k-1}(y)=f_{k-1}, t_{k} g_{k} (\phi)=g_{k-1} (y')=f_{k-1} (x')$. Thus $g_{k} (\phi) \in {\bf Hom}(f_{k-1}(x), f_{k-1}(x'))$. For fullness, there exists $\psi \in {\bf Hom}(x,x')$ such that $f_k (\psi) = g_k (\phi)$. Then $(\psi , \phi ) \in {\bf Hom}((x,y),(x',y'))$ and $j_k (\psi ,\phi ) =\phi$. Therefore $j$ is full on $k$-cells. 
Lastly, we suppose that $f$ is faithful on $k$-cells. let $(x,y), (x',y') \in P_{k-1}$ and $ \psi, \phi \in {\bf Hom}((x,y), (x', y'))$ such that $j_{k}(\psi)=j_{k}(\phi)$. Then $f_k i_k(\psi)=g_k j_k (\psi)=g_k j_k (\phi)=f_k i_k(\phi)$. From faithfulness, $i_k (\psi)= i_k (\phi)$, and $\psi = (i_k (\psi ), j_k (\psi ))=(i_k (\phi ), j_k (\phi ))=\phi $. Therefore $j$ is faithful on $k$-cells.
\end{proof}

By the following remark, for the category of $K$-algebras, we can also get similar results of proposition 3.3.

\begin{remark}
Let $T$ be a monad on ${\cal C}$. Then the forgetful functor $U:K \mathchar`- {\bf Alg} \rightarrow {\cal C}$ creats limits. Hence any monadic functor reflects limits. (Theorem 3.4.2. in [TTT])
\end{remark}

\begin{proposition}
In $K \mathchar`- {\bf Alg}$. Let
\[ \xymatrix{
 & \ar[ld]_f P \ar[rd]^g &  & & & \ar[ld]_h Q \ar[rd]^i & \\
X & & Y &  & Y & & Z 
} \]
be span equivalences, then
\[ \xymatrix{ 
 & & \ar[ld]_p R \pullbackmark{-1,1}{1,1} \ar[rd]^q & & \\
 & \ar[ld]_f P \ar[rd]^g &  & \ar[ld]_h Q \ar[rd]^i & \\
X & & Y & & Z 
} \]
is span equivalence.
\end{proposition}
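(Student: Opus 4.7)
The plan is to exhibit the apex $R$ of the pullback, equipped with its canonical $K$-algebra structure $\theta_R : KR \to R$, together with the composites $f \circ p$ and $i \circ q$, as the desired span equivalence from $\phi$ to the given $K$-algebra $KZ \to Z$. First I would invoke Remark 3.4: since the forgetful functor $U: K \mathchar`- {\bf Alg} \to n \mathchar`- {\bf GSet}$ creates limits, the pullback $R$ of $g$ and $h$ exists in $K \mathchar`- {\bf Alg}$, its underlying $n$-globular set is the pullback computed in $n \mathchar`- {\bf GSet}$, and $p, q$ are automatically maps of $K$-algebras. In particular, $f \circ p$ and $i \circ q$ are themselves maps of $K$-algebras.

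Next I would transfer Proposition 3.3 to the pullback square at hand. Applied to $g: P \to Y$ and $h: Q \to Y$, it yields that $q$ inherits from $g$ each of the three properties: surjectivity on $0$-cells, fullness on $m$-cells for $1 \le m \le n$, and faithfulness on $n$-cells; by the symmetric role of the two projections, $p$ inherits these same properties from $h$. Because Definition 3.1 stipulates that these properties for algebra maps are to be checked on the underlying $n$-globular set map, the conclusions transfer verbatim to $K \mathchar`- {\bf Alg}$.

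Finally I would verify that each of the three properties is closed under composition of underlying maps: surjectivity on $0$-cells is immediate; for fullness on $k$-cells a preimage through the composite is produced by lifting successively through the two factors; for faithfulness on $n$-cells two applications of injectivity of the underlying functions suffice. Applying these closure properties to $f \circ p$ and to $i \circ q$ shows that the triple $\langle \theta_R, f \circ p, i \circ q \rangle$ satisfies every requirement of Definition 3.2. I do not anticipate any real obstacle: the entire argument rests on Proposition 3.3 together with elementary closure of surjective, full, and faithful maps under composition; the only point worth making explicit is the transfer of Proposition 3.3 from $n \mathchar`- {\bf GSet}$ to $K \mathchar`- {\bf Alg}$, which is justified by Remark 3.4 combined with the algebra-map convention set out at the end of Definition 3.1.
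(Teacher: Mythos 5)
Your proposal is correct and follows essentially the same route as the paper's own proof: the paper likewise invokes Proposition 3.3 (transferred to $K \mathchar`- {\bf Alg}$ via Remark 3.4) to conclude that $p$ and $q$ are surjective on $0$-cells, full on $m$-cells, and faithful on $n$-cells, and then uses closure of these properties under composition to conclude for $f \circ p$ and $i \circ q$. Your write-up simply makes explicit the steps the paper leaves implicit (existence of the pullback in $K \mathchar`- {\bf Alg}$, which leg inherits which properties, and the composition lemma), all of which check out.
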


\begin{proof}  By the fact, $p,q$ are are surjective on $0$-cells, full on $k$-cells for $1 \leq k \leq n$ and faithful on $n$-cells. Therefore $f \circ p , i \circ q$ are surjective on $0$-cells, full on $k$-cells for $1 \leq k \leq n$ and faithful on $n$-cells. So the span is span equivalence.
\end{proof}

\begin{theorem}
Span equivalence is equivalence relation on $K$-algebras.
\end{theorem}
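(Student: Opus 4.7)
The plan is to verify the three defining properties of an equivalence relation directly, leveraging the machinery already set up in this section.

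For reflexivity, given a $K$-algebra $\phi: KX \to X$, I would take the trivial span $\langle \phi, \mathrm{id}_X, \mathrm{id}_X \rangle$. The identity map is obviously surjective, full, and faithful on all cells, so this is a span equivalence from $\phi$ to itself. For symmetry, if $\langle \theta, u, v \rangle$ witnesses a span equivalence from $\phi$ to $\psi$, then simply swapping the legs yields $\langle \theta, v, u \rangle$, which witnesses a span equivalence from $\psi$ to $\phi$; the conditions on $u$ and $v$ are completely symmetric in the definition.

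For transitivity, suppose $\phi \sim \psi$ via $\langle \theta_1, u_1, v_1 \rangle$ and $\psi \sim \chi$ via $\langle \theta_2, u_2, v_2 \rangle$. I would form the pullback of $v_1: \theta_1 \to \psi$ against $u_2: \theta_2 \to \psi$ in $K\text{-}\mathbf{Alg}$. By Remark 3.4, the forgetful functor $U: K\text{-}\mathbf{Alg} \to n\text{-}\mathbf{GSet}$ creates limits, so this pullback exists as a $K$-algebra $\theta_3 : KR \to R$ whose underlying $n$-globular set is the pullback in $n\text{-}\mathbf{GSet}$, with projections $p: \theta_3 \to \theta_1$ and $q: \theta_3 \to \theta_2$. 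Composing, we obtain the candidate span $\langle \theta_3, u_1 \circ p,\ v_2 \circ q \rangle$ from $\phi$ to $\chi$.

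The heart of the argument is then showing this span has the required properties, and this is exactly what Proposition 3.5 delivers: since $v_1$ and $u_2$ are surjective on $0$-cells, full on $m$-cells for $1 \le m \le n$, and faithful on $n$-cells, the pullback projections $p$ and $q$ inherit these properties from Proposition 3.3 (applied in both directions of the pullback square). Composition preserves all four properties routinely (surjectivity, fullness, and faithfulness are each stable under composition), so $u_1 \circ p$ and $v_2 \circ q$ meet the definition, completing transitivity. I do not expect any real obstacle here; all the heavy lifting has been done in Proposition 3.3 and its consequence Proposition 3.5, so the proof essentially consists of pointing to these results after setting up the pullback via Remark 3.4.
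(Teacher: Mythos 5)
Your proposal is correct and follows essentially the same route as the paper: the paper's proof simply states that reflexivity and symmetry are immediate from the definition and that transitivity follows from Proposition 3.5 (the pullback composition of spans, which relies on Proposition 3.3 and Remark 3.4). You have merely spelled out the details that the paper leaves implicit.
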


\begin{proof} It is straightforward from the definition and previous proposition that span equivalence is equivalence relation.\end{proof}

\section{Characterizing equivalence of categories via spans}
In this section, we define span equivalence in ${\bf Cat}$ which is independent of that in $K \mathchar`- {\bf Alg}$. Then we show that two categories are ordinary equivalent if and only if they are span equivalent in {\bf Cat} and that span equivalence of categories implies span equivalence of algebras of them. Consequently, span equivalence is a generalization of ordinary equivalence.

\begin{definition}
Let ${\cal A}$ and ${\cal B}$ be categories. We say that ${\cal A}$ and ${\cal B}$ are \emph{span equivalent in {\bf Cat}} if there exists a triple $\langle {\cal A}, u, v \rangle$ such that ${\cal C}$ is a category, $u: {\cal C} \rightarrow {\cal A}$ and $v: {\cal C} \rightarrow {\cal B}$ are functors, surjective on objects, full and faithful.
\end{definition}

\begin{definition}
Let ${\cal A}$ and ${\cal B}$ be categories, let $\langle S:{\cal A}\rightarrow{\cal B}, T:{\cal B} \rightarrow {\cal A}, \eta :I_{{\cal A}} \rightarrow TS , \epsilon :ST \rightarrow I_{{\cal B}} \rangle$ be an adjoint equivalence between ${\cal A}$ and ${\cal B}$. We define a category, {\rm equivalence fusion} ${\cal A}\sqcup \hspace{-.76em} \mid {\cal B}$ , as follows:

\begin{list}{$\bullet$}{}
\item object-set
\[ {\bf Ob}{\rm (}{\cal A}\sqcup \hspace{-.76em} \mid {\cal B}{\rm )} := {\bf Ob}{\rm (}{\cal A}{\rm )} \bigsqcup {\bf Ob}{\rm (}{\cal B}{\rm )} \hspace{20pt}  {\rm (disjoint)} \]
\item hom-set
\[ {\bf Hom}{\rm (}x,y{\rm )} := \left\{ \begin{array}{ll}
\{ \langle f,x,y \rangle \mid f \in {\cal A}{\rm (}x,y{\rm )} \} & {\rm (}x,y \in {\cal A}{\rm )} \\
\{ \langle f,x,y \rangle \mid f \in {\cal B}{\rm (}x,y{\rm )} \} & {\rm (}x,y \in {\cal B}{\rm )} \\
\{ \langle f,x,y \rangle \mid f \in {\cal B}{\rm (}Sx,y{\rm )} \} & {\rm (}x \in {\cal A}, y \in {\cal B}{\rm )} \\
\{ \langle f,x,y \rangle \mid f \in {\cal B}{\rm (}x,Sy{\rm )} \} & {\rm (}x \in {\cal B}, y \in {\cal A}{\rm )} \\
\end{array} \right. \]
\item composition
\[ \begin{array}{ll}
\tilde{\circ} : {\bf Hom}{\rm (}y,z{\rm )} \times {\bf Hom}{\rm (}x,y{\rm )} & \longrightarrow {\bf Hom}{\rm (}x,z{\rm )} \\
\hspace{50pt} \langle \langle g,y,z \rangle , \langle f,x,y \rangle \rangle & \longmapsto \langle g,y,z \rangle \tilde{\circ} \langle f,x,y \rangle := \langle g \circ f,x,z \rangle 
\end{array} \]
 \[ g \circ f := \left\{ \begin{array}{ll}
 g \circ_{{\cal A}} f & {\rm (}x,y,z \in {\cal A}{\rm )} \\
 g \circ_{{\cal B}} f & {\rm (}x,y,z \in {\cal B}{\rm )} \\
 g \circ_{{\cal B}} Sf & {\rm (}x,y \in {\cal A}, z \in {\cal B}{\rm )} \\
 g \circ_{{\cal B}} f & {\rm (}x \in {\cal A}, y,z \in {\cal B}{\rm )} \\
 g \circ_{{\cal B}} f & {\rm (}x,y \in {\cal B}, z \in {\cal A}{\rm )} \\
 Sg \circ_{{\cal B}} f & {\rm (}x \in {\cal B}, y,z \in {\cal A}{\rm )} \\
 \eta_{z}^{-1} \circ_{{\cal A}} Tg \circ_{{\cal A}} Tf \circ_{{\cal A}} \eta_{x} & {\rm (}x \in {\cal A}, y \in {\cal B}, z \in {\cal A}{\rm )} \\
 g \circ_{{\cal B}} f & {\rm (}x \in {\cal B}, y \in {\cal A}, z \in {\cal B}{\rm )} \\
\end{array} \right. \]
\item identities
\[ {\rm id}_{x} :=\left\{ \begin{array}{ll}
\langle {\rm id}_{x},x,x \rangle & {\rm (}x \in {\cal A}, {\rm id}_{x} \in {\cal A}{\rm (}x,x{\rm )} {\rm )} \\
\langle {\rm id}_{x},x,x \rangle & {\rm (}x \in {\cal B}, {\rm id}_{x} \in {\cal B}{\rm (}x,x{\rm )} {\rm )} \\
\end{array} \right. \]
\end{list}
\end{definition}

\begin{proposition}
The equivalence fusion ${\cal A}\sqcup \hspace{-.76em} \mid {\cal B}$ forms a category.
\end{proposition}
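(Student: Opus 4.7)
The plan is to verify the three category axioms, namely that (i) each case of the composition rule lands in the declared hom-set, (ii) the identities act as identities, and (iii) composition is associative. In each axiom I would proceed by a case analysis on which of ${\cal A}$ or ${\cal B}$ each involved object belongs to.

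For well-definedness (i), the seven routine subcases follow directly from functoriality of $S$ together with the categorical structures of ${\cal A}$ and ${\cal B}$; the only subtle subcase is $x, z \in {\cal A}$ and $y \in {\cal B}$, where one first forms $Tg \circ Tf : TSx \to TSz$ in ${\cal A}$ and then pre- and post-composes with $\eta_x$ and $\eta_z^{-1}$. This uses crucially that $\eta$ is a natural isomorphism, which is guaranteed because we are given an adjoint equivalence. The unit law (ii) is inherited when the morphism lies wholly in ${\cal A}$ or wholly in ${\cal B}$; in the mixed cases it reduces, after noting that $S({\rm id})={\rm id}$ and $T({\rm id})={\rm id}$, either to a unit law in ${\cal B}$ or to a naturality square of $\eta$ together with the cancellation $\eta_a \circ \eta_a^{-1}={\rm id}$.

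Associativity (iii) is the main obstacle. Given a composable triple $f:x \to y$, $g:y \to z$, $h:z \to w$ there are $2^4=16$ subcases indexed by the memberships of $x,y,z,w$. When all four lie in the same category, or when the sequence of types has at most one switch between the two, associativity of $\tilde{\circ}$ reduces at once to associativity in ${\cal A}$ or ${\cal B}$, possibly after invoking functoriality of $S$. The genuinely delicate cases are the alternating ones, in which the formula involving $\eta$ appears on one or both sides. In each such subcase the equation unfolds to an identity whose proof is a short diagram chase using naturality of $\eta$ together with the cancellation $\eta_a^{-1}\circ \eta_a={\rm id}$; for example, when $x, z, w \in {\cal A}$ and $y \in {\cal B}$, comparing $(h \tilde{\circ} g) \tilde{\circ} f$ with $h \tilde{\circ} (g \tilde{\circ} f)$ reduces, after expanding both sides, to the equality $\eta_w^{-1} \circ TSh \circ \eta_z = h$, which is exactly naturality of $\eta$ applied to the morphism $h$. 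I do not anticipate any conceptual difficulty beyond this bookkeeping; the main labor is to enumerate the sixteen associativity subcases cleanly and, in each, to supply the corresponding short diagram chase using functoriality of $S$ and $T$, naturality of $\eta$, and (in the alternating cases) invertibility of $\eta$.
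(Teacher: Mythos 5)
Your plan is essentially the paper's proof: the same sixteen-way case analysis on the memberships of $x,y,z,w$ for associativity, the four-way analysis for the unit laws, and the same reduction of the alternating cases to naturality of $\eta$; your sample case ($x,z,w\in{\cal A}$, $y\in{\cal B}$) matches the paper's computation exactly. One caveat: in the two doubly-alternating cases ($x\in{\cal A},y\in{\cal B},z\in{\cal A},w\in{\cal B}$ and its mirror) naturality and invertibility of $\eta$ alone do not suffice --- you must show $S(\eta_z^{-1}\circ T k\circ\eta_x)=k$ for an arbitrary $k\in{\cal B}(Sx,Sz)$, which needs faithfulness of $T$, i.e.\ the counit; the paper handles this by inserting $\epsilon_{Sz}\circ S\eta_z=\mathrm{id}$ and using naturality of $\epsilon$, and this is precisely where the hypothesis of an \emph{adjoint} equivalence (triangle identities) is used rather than merely a natural isomorphism $\eta$.
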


\begin{proof}
It is easy to check that the composition $\tilde{\circ}$ is map from ${\bf Hom}(x,y) \times {\bf Hom}(y,z)$ to ${\bf Hom}(x,z)$. Now, we prove that the composition $\tilde{\circ}$ satisfies associative law and identity law by case analysis.
\begin{itemize}

\item associative law
 \begin{itemize}
 \item $x \in {\cal A}, y \in {\cal A}, z \in {\cal A}, w \in {\cal A}$, \\
 $h \circ ( g \circ f ) = h \circ_{{\cal A}} ( g \circ_{{\cal A}} f ) $\\
 $( h \circ g ) \circ f = ( h \circ_{{\cal A}} g ) \circ_{{\cal A}} f $
 \item $x \in {\cal A}, y \in {\cal A}, z \in {\cal A}, w \in {\cal B}$, \\
 $h \circ ( g \circ f ) = h \circ ( g \circ_{{\cal A}} f ) = h \circ_{{\cal B}} S(g \circ_{{\cal A}} f) = h \circ_{{\cal B}} ( Sg \circ_{{\cal B}} Sf )$\\
 $( h \circ g ) \circ f = ( h \circ_{{\cal B}} Sg ) \circ f = ( h \circ_{{\cal B}} Sg ) \circ_{{\cal B}} Sf $
 \item $x \in {\cal A}, y \in {\cal A}, z \in {\cal B}, w \in {\cal A}$, \\
 $h \circ ( g \circ f ) = h \circ ( g \circ_{{\cal B}} Sf ) = \eta_{w}^{-1} \circ_{{\cal A}} Th \circ_{{\cal A}} T(g \circ_{{\cal B}} Sf) \circ_{{\cal A}} \eta_{x}  \\
 \hspace{134pt} =\eta_{w}^{-1} \circ_{{\cal A}} Th \circ_{{\cal A}} Tg \circ_{{\cal A}} TSf \circ_{{\cal A}} \eta_{x} \\
  \hspace{134pt} =\eta_{w}^{-1} \circ_{{\cal A}} Th \circ_{{\cal A}} Tg \circ_{{\cal A}} \eta_{y} \circ_{\cal A} f  \\
 ( h \circ g ) \circ f = ( \eta_{w}^{-1} \circ_{{\cal A}} Th \circ_{{\cal A}} Tg \circ_{{\cal A}} \eta_{y} ) \circ f =( \eta_{w}^{-1} \circ_{{\cal A}} Th \circ_{{\cal A}} Tg \circ_{{\cal A}} \eta_{y} ) \circ_{{\cal A}} f $
 \item $ x \in {\cal A}, y \in {\cal A}, z \in {\cal B}, w \in {\cal B}$, \\
 $h \circ ( g \circ f ) = h \circ ( g \circ_{{\cal B}} Sf ) = h \circ_{{\cal B}} ( g \circ_{{\cal B}} Sf )$\\
$( h \circ g ) \circ f = ( h \circ_{{\cal B}} g ) \circ f =  ( h \circ_{{\cal B}} g ) \circ_{{\cal B}} Sf $
 \item $x \in {\cal A}, y \in {\cal B}, z \in {\cal A}, w \in {\cal A}$, \\
 $h \circ ( g \circ f ) = h \circ ( \eta_{z}^{-1} \circ_{{\cal A}} Tg  \circ_{{\cal A}}  Tf  \circ_{{\cal A}}  \eta_{x}) = h  \circ_{{\cal A}}   \eta_{z}^{-1}  \circ_{{\cal A}} Tg  \circ_{{\cal A}} Tf  \circ_{{\cal A}} \eta_{x} \\
 \hspace{210pt} = \eta_{w}^{-1} \circ_{{\cal A}} TSh \circ_{{\cal A}} Tg  \circ_{{\cal A}} Tf  \circ_{{\cal A}} \eta_{x} $ \\
$( h \circ g ) \circ f = ( Sh \circ_{{\cal B}} g ) \circ f = \eta_{w}^{-1} \circ_{{\cal A}} T(Sh \circ_{{\cal B}} g) \circ_{{\cal A}} Tf \circ_{{\cal A}} \eta_{x} \\
\hspace{133pt} = \eta_{w}^{-1} \circ_{{\cal A}} TSh \circ_{{\cal A}} Tg \circ_{{\cal A}} Tf \circ_{{\cal A}} \eta_{x}$
 \item $x \in {\cal A}, y \in {\cal B}, z \in {\cal A}, w \in {\cal B}$, \\
 $h \circ ( g \circ f ) = h \circ ( \eta_{z}^{-1} \circ_{{\cal A}} Tg \circ_{{\cal A}} Tf \circ_{{\cal A}} \eta_{x} )\\
\hspace{52pt} = h \circ_{{\cal B}} S( \eta_{z}^{-1} \circ_{{\cal A}} Tg \circ_{{\cal A}} Tf \circ_{{\cal A}} \eta_{x} )\\
\hspace{52pt} =  h \circ_{{\cal B}} S\eta_{z}^{-1} \circ_{{\cal B}} ST(g \circ_{{\cal B}} f) \circ_{{\cal B}} S\eta_{x} \\
\hspace{52pt} = h \circ_{\cal B} (\epsilon_{Sz} \circ_{{\cal B}} S \eta_{z}) \circ_{{\cal B}}S \eta_{z}^{-1} \circ_{{\cal B}} ST(g \circ_{{\cal B}} f) \circ_{{\cal B}} S \eta_{x} \\
\hspace{52pt} = h \circ_{\cal B} \epsilon_{Sz} \circ_{{\cal B}} ST(g \circ_{{\cal B}} f) \circ_{{\cal B}} S \eta_{x} \\
\hspace{52pt} = h \circ_{\cal B} g \circ_{{\cal B}} f \circ_{{\cal B}} \epsilon_{Sx} \circ_{{\cal B}} S \eta_{x} \\
\hspace{52pt} = h \circ_{\cal B} g \circ_{{\cal B}} f $\\
$( h \circ g ) \circ f = ( h \circ_{{\cal B}} g ) \circ f = ( h \circ_{{\cal B}} g ) \circ_{{\cal B}} f $
 \item $ x \in {\cal A}, y \in {\cal B}, z \in {\cal B}, w \in {\cal A}$, \\
 $h \circ ( g \circ f ) = h \circ ( g \circ_{{\cal B}} f ) = \eta_{w}^{-1} \circ_{{\cal A}} Th \circ_{{\cal A}} T(g \circ_{{\cal B}} f) \circ_{{\cal A}} \eta_{x}$\\
$( h \circ g ) \circ f = ( h \circ_{{\cal B}} g ) \circ f = \eta_{w}^{-1} \circ_{{\cal A}} T(h \circ_{{\cal B}} g) \circ_{{\cal A}} Tf \circ_{{\cal A}} \eta_{x} $
 \item $x \in {\cal A}, y \in {\cal B}, z \in {\cal B}, w \in {\cal B}$, \\
$h \circ ( g \circ f ) = h \circ_{{\cal B}} ( g \circ_{{\cal B}} f ) $\\
$( h \circ g ) \circ f = ( h \circ_{{\cal B}} g ) \circ_{{\cal B}} f $
 \item $x \in {\cal B}, y \in {\cal A}, z \in {\cal A}, w \in {\cal A}$, \\
$h \circ ( g \circ f ) = h \circ (Sg \circ_{{\cal B}} f) = Sh \circ_{{\cal B}} (Sg \circ_{{\cal B}} f)$\\
$( h \circ g ) \circ f = (h \circ_{{\cal A}} g) \circ f = S(h \circ_{{\cal A}} g) \circ_{{\cal B}} f = (Sh \circ_{{\cal B}} Sg) \circ_{{\cal B}} f$
 \item $x \in {\cal B}, y \in {\cal A}, z \in {\cal A}, w \in {\cal B}$, \\
$h \circ ( g \circ f ) = h \circ (Sg \circ_{{\cal B}} f) = h \circ_{{\cal B}} (Sg \circ_{{\cal B}} f)$\\
$( h \circ g ) \circ f = (h \circ_{{\cal B}} Sg) \circ f = (h \circ_{{\cal B}} Sg) \circ_{{\cal B}} f$
 \item $x \in {\cal B}, y \in {\cal A}, z \in {\cal B}, w \in {\cal A}$, \\
$h \circ ( g \circ f ) = h \circ (g \circ_{{\cal B}} f) = h \circ_{{\cal B}} (g \circ_{{\cal B}} f)$\\
$( h \circ g ) \circ f = (\eta_{w}^{-1} \circ_{{\cal A}} Th \circ_{{\cal A}} Tg \circ_{{\cal A}} \eta_{y}) \circ f \\
\hspace{52pt} = S (\eta_{w}^{-1} \circ_{{\cal A}} Th \circ_{{\cal A}} Tg \circ_{{\cal A}} \eta_{y}) \circ_{{\cal B}} f \\
\hspace{52pt} = S\eta_{w}^{-1} \circ_{{\cal B}} ST(h \circ_{{\cal B}} g) \circ_{{\cal B}} S\eta_{y} \circ_{{\cal B}} f \\
\hspace{52pt} = (\epsilon_{Sw} \circ_{{\cal B}} S \eta_{w}) \circ_{{\cal B}}S \eta_{w}^{-1} \circ_{{\cal B}} ST(h \circ_{{\cal B}} g) \circ_{{\cal B}} S \eta_{y} \circ_{\cal B} f \\
\hspace{52pt} = \epsilon_{Sw} \circ_{{\cal B}} ST(h \circ_{{\cal B}} g) \circ_{{\cal B}} S \eta_{y} \circ_{\cal B} f \\
\hspace{52pt} = h \circ_{{\cal B}} g \circ_{{\cal B}} \epsilon_{Sy} \circ_{{\cal B}} S \eta_{y} \circ_{\cal B} f \\
\hspace{52pt} = h \circ_{{\cal B}} g \circ_{\cal B} f$
 \item $x \in {\cal B}, y \in {\cal A}, z \in {\cal B}, w \in {\cal B}$, \\
$h \circ ( g \circ f ) = h \circ_{{\cal B}} ( g \circ_{{\cal B}} f ) $\\
$( h \circ g ) \circ f = ( h \circ_{{\cal B}} g ) \circ_{{\cal B}} f $
 \item $x \in {\cal B}, y \in {\cal B}, z \in {\cal A}, w \in {\cal A}$, \\
$h \circ ( g \circ f ) = h \circ (g \circ_{{\cal B}} f) = Sh \circ_{{\cal B}} (g \circ_{{\cal B}} f)$\\
$( h \circ g ) \circ f = (Sh \circ_{{\cal B}} g) \circ f = (Sh \circ_{{\cal B}} g) \circ_{{\cal B}} f$
 \item $x \in {\cal B}, y \in {\cal B}, z \in {\cal A}, w \in {\cal B}$, \\
$h \circ ( g \circ f ) = h \circ_{{\cal B}} ( g \circ_{{\cal B}} f ) $\\
$( h \circ g ) \circ f = ( h \circ_{{\cal B}} g ) \circ_{{\cal B}} f $
 \item $x \in {\cal B}, y \in {\cal B}, z \in {\cal B}, w \in {\cal A}$, \\
$h \circ ( g \circ f ) = h \circ_{{\cal B}} ( g \circ_{{\cal B}} f ) $\\
$( h \circ g ) \circ f = ( h \circ_{{\cal B}} g ) \circ_{{\cal B}} f $
 \item $x \in {\cal B}, y \in {\cal B}, z \in {\cal B}, w \in {\cal B}$, \\
$h \circ ( g \circ f ) = h \circ_{{\cal B}} ( g \circ_{{\cal B}} f ) $\\
$( h \circ g ) \circ f = ( h \circ_{{\cal B}} g ) \circ_{{\cal B}} f $
 \end{itemize}
 
\item identity law
 \begin{itemize}
 \item $ x \in {\cal A}, y \in {\cal A}$, \\
$f \circ {\rm id}_{x} = f \circ_{{\cal A}} {\rm id}_{x} = f$\\
${\rm id}_{y} \circ f = {\rm id}_{y} \circ_{{\cal A}} f = f$
 \item $x \in {\cal A}, y \in {\cal B}$, \\
$f \circ {\rm id}_{x} = f \circ_{{\cal B}} S{\rm id}_{x} = f \circ_{{\cal B}} {\rm id}_{Sx} = f$\\
${\rm id}_{y} \circ f = {\rm id}_{y} \circ_{{\cal B}} f = f$
 \item $ x \in {\cal B}, y \in {\cal A}$, \\
$f \circ {\rm id}_{x} = f \circ_{{\cal B}} {\rm id}_{x} = f$\\
${\rm id}_{y} \circ f = S{\rm id}_{y} \circ_{{\cal B}} f = {\rm id}_{Sy} \circ_{{\cal B}} f = f$
 \item $x \in {\cal B}, y \in {\cal B}$, \\
$f \circ {\rm id}_{x} = f \circ_{{\cal B}} {\rm id}_{x} = f$\\
${\rm id}_{y} \circ f = {\rm id}_{y} \circ_{{\cal B}} f = f$
 \end{itemize}

\end{itemize}
\end{proof}

\begin{definition}
Let $\langle S:{\cal A}\rightarrow{\cal B}, T:{\cal B} \rightarrow {\cal A},  \eta :I_{{\cal A}} \rightarrow TS , \epsilon :ST \rightarrow I_{{\cal B}} \rangle$ be an adjoint equivalence, let ${\cal A}\sqcup \hspace{-.76em} \mid {\cal B}$ be the equivalence fusion. We define the {\em projections} $u,v$ as follows:

\begin{list}{$\bullet$}{}
\item $u:{\cal A}\sqcup \hspace{-.76em} \mid {\cal B} \longrightarrow A$ \\
 object-function 
$ u:{\bf Ob}{\rm (}{\cal A}\sqcup \hspace{-.76em} \mid {\cal B}{\rm )} \longrightarrow {\bf Ob}{\rm (}{\cal A}{\rm )} $ \\
$ \hspace{141pt} x  \longmapsto ux := \left\{ \begin{array}{ll}
x & {\rm (}x \in {\cal A}{\rm )} \\
Tx & {\rm (}x \in {\cal B}{\rm )} \\
\end{array} \right. $ \\
 hom-functions $u:{\bf Hom}{\rm (}x,y{\rm )} \longrightarrow {\cal A}{\rm (}ux,uy{\rm )}$ \\
\hspace{106pt} $\langle f,x,y \rangle \longmapsto uf := \left\{ \begin{array}{ll} 
f & {\rm (}x,y \in {\cal A}{\rm )} \\
Tf & {\rm (}x,y \in {\cal B}{\rm )} \\
Tf \circ_{{\cal A}} \eta_{x} & {\rm (}x \in {\cal A}, y \in {\cal B}{\rm )} \\
\eta_{y}^{-1} \circ_{{\cal A}} Tf & {\rm (}x \in {\cal B}, y \in {\cal A}{\rm )} \\
\end{array} \right. $ \\

\item $v:{\cal A}\sqcup \hspace{-.76em} \mid {\cal B} \longrightarrow B$ \\
 object-function $v:{\bf Ob}{\rm (}{\cal A}\sqcup \hspace{-.76em} \mid {\cal B}{\rm )} \longrightarrow {\bf Ob}{\rm (}{\cal B}{\rm )}$ \\
\hspace{141pt} $x \longmapsto vx := \left\{ \begin{array}{ll}
Sx & {\rm (}x \in {\cal A}{\rm )} \\
x & {\rm (}x \in {\cal B}{\rm )} \\
\end{array} \right. $ \\
 hom-functions $v:{\bf Hom}{\rm (}x,y{\rm )} \longrightarrow {\cal B}{\rm (}ux,uy{\rm )}$　\\
\hspace{106pt} $\langle f,x,y \rangle \longmapsto vf := \left\{ \begin{array}{ll}
Sf & {\rm (}x,y \in {\cal A}{\rm )} \\
f & {\rm ( others )} \\ 
\end{array} \right.$ 
\end{list}
\end{definition}

\begin{proposition}
The projections $u,v$ are functors.
\end{proposition}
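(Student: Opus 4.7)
The plan is to verify both parts of functoriality---preservation of identities and of composition---by case analysis analogous to the proof of Proposition 4.3. Identity preservation is immediate from functoriality of $S$ and $T$: for $x\in{\cal A}$, $u({\rm id}_x)={\rm id}_x={\rm id}_{ux}$ and $v({\rm id}_x)=S{\rm id}_x={\rm id}_{Sx}={\rm id}_{vx}$; for $x\in{\cal B}$, $u({\rm id}_x)=T{\rm id}_x={\rm id}_{Tx}={\rm id}_{ux}$ and $v({\rm id}_x)={\rm id}_x={\rm id}_{vx}$.

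For composition preservation, I would split into the eight cases determined by $(x,y,z)\in\{{\cal A},{\cal B}\}^3$, in each case unfolding the definition of $\tilde\circ$ from Definition 4.2 together with the hom-function definitions of $u$ and $v$. The tools available are: functoriality of $S$ and $T$; naturality of $\eta$ and $\epsilon$; the triangle identities; and the fact that $\eta$ and $\epsilon$ are isomorphisms (adjoint equivalence), so that $S\eta_x$ and $\epsilon_{Sx}$ are mutually inverse in ${\cal B}$ and $T\epsilon_x$ and $\eta_{Tx}$ are mutually inverse in ${\cal A}$.

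The pure cases (all three in ${\cal A}$ or all three in ${\cal B}$) reduce immediately to functoriality of $S$ or $T$. Most mixed cases collapse tautologically once the definitions are unfolded: for instance, for $u$ in the case $(x,y,z)=({\cal A},{\cal B},{\cal A})$ one obtains $u(g\,\tilde\circ\,f)=\eta_z^{-1}\circ_{{\cal A}} Tg\circ_{{\cal A}} Tf\circ_{{\cal A}}\eta_x$, which factors by associativity alone as $(\eta_z^{-1}\circ_{{\cal A}} Tg)\circ_{{\cal A}}(Tf\circ_{{\cal A}}\eta_x)=ug\circ_{{\cal A}} uf$. Exactly two cases genuinely invoke the adjoint-equivalence structure: for $u$, the case $({\cal B},{\cal A},{\cal B})$ produces the factor $\eta_y\circ_{{\cal A}}\eta_y^{-1}$ upon expanding $ug\circ_{{\cal A}} uf$, cancelled by invertibility of $\eta$; for $v$, the case $({\cal A},{\cal B},{\cal A})$ requires rewriting $S\eta_z^{-1}=\epsilon_{Sz}$, then applying naturality of $\epsilon$ twice together with the triangle identity $\epsilon_{Sx}\circ_{{\cal B}} S\eta_x={\rm id}_{Sx}$ to collapse $v(g\,\tilde\circ\,f)$ down to $g\circ_{{\cal B}} f=vg\circ_{{\cal B}} vf$. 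The remaining mixed cases use single applications of naturality of $\eta$ (for $u$) or are immediate (for $v$, since $v$ acts as the identity on morphisms whose endpoints are not both in ${\cal A}$).

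The main obstacle is not conceptual but combinatorial: there is no new idea beyond the triangle identities and naturality, but one must track carefully which composition symbol ($\circ_{{\cal A}}$ or $\circ_{{\cal B}}$) applies at each step and in which category each arrow lives after applying $S$ or $T$, especially in the cases where $\tilde\circ$ itself crosses the adjunction via $\eta^{-1}$.
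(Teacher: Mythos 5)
Your proposal follows essentially the same route as the paper: identity preservation from functoriality of $S$ and $T$, and composition preservation by the eight-fold case analysis on $(x,y,z)$, with the only nontrivial steps being naturality of $\eta$, cancellation of $\eta_y\circ_{{\cal A}}\eta_y^{-1}$ in the $({\cal B},{\cal A},{\cal B})$ case for $u$, and the triangle-identity computation collapsing $S\eta_z^{-1}\circ_{{\cal B}} ST(g\circ_{{\cal B}} f)\circ_{{\cal B}} S\eta_x$ in the $({\cal A},{\cal B},{\cal A})$ case for $v$ --- exactly the cases the paper singles out. The plan is correct and its identification of which cases are tautological versus which genuinely use the adjoint-equivalence data matches the paper's computations.
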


\begin{proof} We show that $u,v$ preserve composition of morphisms and identity morphism by case analysis.
\begin{itemize}
\item $u$ preserves composition of morphisms
 \begin{itemize}
 \item $x \in {\cal A}, y \in {\cal A}, z \in {\cal A}$, \\
 $u(g \circ f) = u(g \circ_{{\cal A}} f) = g \circ_{{\cal A}} f$ \\
$ug \circ_{{\cal A}} uf = g \circ_{{\cal A}} f$ 
 \item $x \in {\cal A}, y \in {\cal A}, z \in {\cal B}$, \\
 $u(g \circ f) = u(g \circ_{{\cal B}} Sf) = T(g \circ_{{\cal B}} Sf) \circ_{{\cal A}} \eta_{x}=Tg \circ_{{\cal A}} TSf \circ_{{\cal A}} \eta_{x}$ \\
$ug \circ_{{\cal A}} uf =(Tg \circ_{{\cal A}} \eta_{y}) \circ_{{\cal A}} f = Tg \circ_{\cal A} TSf \circ_{{\cal A}} \eta_{x}$ 
 \item $x \in {\cal A}, y \in {\cal B}, z \in {\cal A}$, \\
 $u(g \circ f) =u(\eta_{z}^{-1} \circ_{{\cal A}} Tg \circ_{{\cal A}} Tf \circ_{{\cal A}} \eta_{x})=\eta_{z}^{-1} \circ_{{\cal A}} Tg \circ_{{\cal A}} Tf \circ_{{\cal A}} \eta_{x}$ \\
$ug \circ_{{\cal A}} uf=(\eta_{z}^{-1} \circ_{{\cal A}} Tg) \circ_{{\cal A}} (Tf \circ_{{\cal A}} \eta_{x})$ 
 \item $x \in {\cal A}, y \in {\cal B}, z \in {\cal B}$, \\
 $u(g \circ f) = u(g \circ_{{\cal B}} f)=T(g \circ_{{\cal B}} f) \circ_{{\cal A}} \eta_{x}=Tg \circ_{{\cal A}} Tf \circ_{{\cal A}} \eta_{x}$ \\
$ug \circ_{{\cal A}} uf = Tg \circ_{{\cal A}} (Tf \circ_{{\cal A}} \eta_{x})$
 \item $x \in {\cal B}, y \in {\cal A}, z \in {\cal A}$, \\
$u(g \circ f) = u(Sg \circ_{{\cal B}} f) = \eta_{z}^{-1} \circ_{{\cal A}} T(Sg \circ_{{\cal B}} f) = \eta_{z}^{-1} \circ_{{\cal A}} TSg \circ_{{\cal B}} Tf$ \\
$ug \circ_{{\cal A}} uf = g \circ_{{\cal A}} (\eta_{y}^{-1} \circ_{{\cal A}} Tf) = \eta_{z}^{-1} \circ_{{\cal A}} TSg \circ_{\cal A} Tf$
 \item $ x \in {\cal B}, y \in {\cal A}, z \in {\cal B}$, \\
 $u(g \circ f) = u(g \circ_{{\cal B}} f) = T(g \circ_{{\cal B}} f) = Tg \circ_{{\cal A}} Tf$ \\
$ug \circ_{{\cal A}} uf = (Tg \circ_{{\cal A}} \eta_{y}) \circ_{{\cal A}} (\eta_{y}^{-1} \circ_{{\cal A}} Tf)= Tg \circ_{{\cal A}} Tf$
 \item $ x \in {\cal B}, y \in {\cal B}, z \in {\cal A}$, \\
$u(g \circ f) = u(g \circ_{{\cal B}} f) = \eta_{z}^{-1} \circ_{{\cal A}} T(g \circ_{{\cal B}} f) = \eta_{z}^{-1} \circ_{{\cal A}} Tg \circ_{{\cal A}} Tf$ \\
$ug \circ_{{\cal A}} uf = (\eta_{z}^{-1} \circ_{{\cal A}} Tg) \circ_{{\cal A}} Tf$
 \item $x \in {\cal B}, y \in {\cal B}, z \in {\cal B}$, \\
 $u(g \circ f) = u( g \circ_{{\cal B}} f ) = T(g \circ_{{\cal B}} f)=Tg \circ_{{\cal A}} Tf$ \\
$ug \circ_{{\cal A}} uf =Tg \circ_{{\cal A}} Tf$
 \end{itemize}
 
\item $u$ preserves identity morphisms
 \begin{itemize}
 \item $x \in {\cal A}$, \\
$u({\rm id}_{x})={\rm id}_{x}={\rm id}_{ux}$
 \item $x \in {\cal B}$, \\
$u({\rm id}_{x})=T{\rm id}_{x}={\rm id}_{Tx}={\rm id}_{ux}$
 \end{itemize}
 
 \item $v$ preserves composition of morphisms
 \begin{itemize}
 \item $x \in {\cal A}, y \in {\cal A}, z \in {\cal A}$, \\
$v(g \circ f) =v(g \circ_{{\cal A}} f)=S(g \circ_{{\cal A}} f) = Sg \circ_{{\cal B}} Sf$ \\
$vg \circ_{{\cal B}} vf =Sg \circ_{{\cal A}} Sf$
 \item $x \in {\cal A}, y \in {\cal A}, z \in {\cal B}$, \\
$v(g \circ f) = v(g \circ_{{\cal B}} Sf) = g \circ_{{\cal B}} Sf$ \\
$vg \circ_{{\cal B}} vf = g \circ_{{\cal B}} Sf$
 \item $ x \in {\cal A}, y \in {\cal B}, z \in {\cal A}$, \\
$v(g \circ f) = v(\eta_{z}^{-1} \circ_{{\cal A}} Tg \circ_{{\cal A}} Tf \circ_{{\cal A}} \eta_{x}) \\
\hspace{40pt} =S(\eta_{z}^{-1} \circ_{{\cal A}} Tg \circ_{{\cal A}} Tf \circ_{{\cal A}} \eta_{x}) \\
\hspace{40pt} = S\eta_{z}^{-1} \circ_{{\cal B}} ST(g \circ_{{\cal B}} f) \circ_{{\cal B}} S\eta_{x} \\
\hspace{40pt} = g \circ_{{\cal B}} f $ \\
$vg \circ_{{\cal B}} vf = g \circ_{{\cal B}} f$
\item $ x \in {\cal A}, y \in {\cal B}, z \in {\cal B}$, \\
$v(g \circ f) = v(g \circ_{{\cal B}} f) = g \circ_{{\cal B}} f$ \\
$vg \circ_{{\cal B}} vf = g \circ_{{\cal B}} f$
\item $x \in {\cal B}, y \in {\cal A}, z \in {\cal A}$, \\
$v(g \circ f) = v(Sg \circ_{{\cal B}} f) = Sg \circ_{{\cal B}} f$ \\
$vg \circ_{{\cal B}} vf = Sg \circ_{{\cal B}} f$
\item $x \in {\cal B}, y \in {\cal A}, z \in {\cal B}$, \\
$v(g \circ f) = v(g \circ_{{\cal B}} f) = g \circ_{{\cal B}} f$ \\
$vg \circ_{{\cal B}} vf = g \circ_{{\cal B}} f$
\item $x \in {\cal B}, y \in {\cal B}, z \in {\cal A}$, \\
$v(g \circ f) = v(g \circ_{{\cal B}} f) = g \circ_{{\cal B}} f$ \\
$vg \circ_{{\cal B}} vf = g \circ_{{\cal B}} f$ 
\item $ x \in {\cal B}, y \in {\cal B}, z \in {\cal B}$, \\
$v(g \circ f) = v(g \circ_{{\cal B}} f) = g \circ_{{\cal B}} f$ \\
$vg \circ_{{\cal B}} vf = g \circ_{{\cal B}} f$
 \end{itemize}
 
\item $v$ preserves identity morphisms
 \begin{itemize}
 \item $ x \in {\cal A}$ \\
$v({\rm id}_{x})=S{\rm id}_{x}={\rm id}_{Sx}={\rm id}_{vx}$
 \item $x \in {\cal B}$ \\
$v({\rm id}_{x})={\rm id}_{x}~{\rm id}_{vx}$
 \end{itemize}
 
\end{itemize}
\end{proof}

\begin{proposition}
The projections $u,v$ are surjective on objects, full and faithful.
\end{proposition}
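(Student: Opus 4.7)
The plan is to treat surjectivity separately and then handle fullness plus faithfulness together as bijectivity of each hom-function, by case analysis on whether each of $x,y$ lies in ${\cal A}$ or ${\cal B}$. Throughout the proof I intend to use two facts that are built into an adjoint equivalence: both $S$ and $T$ are fully faithful (since $\eta$ and $\epsilon$ are natural isomorphisms), and the adjunction bijection ${\cal B}(Sx,y)\cong{\cal A}(x,Ty)$ is implemented by $f\mapsto Tf\circ_{{\cal A}}\eta_{x}$, with inverse given by $g\mapsto \epsilon_{y}\circ_{{\cal B}}Sg$.

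Surjectivity on objects is immediate. By construction ${\bf Ob}({\cal A})\subseteq{\bf Ob}({\cal A}\sqcup\!\!\!\mid{\cal B})$ and $u$ restricted to this summand is the identity, so $u$ hits every object of ${\cal A}$; symmetrically for $v$ on ${\cal B}$.

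For fullness and faithfulness of $u$ I will show each hom-function $u_{x,y}:{\bf Hom}(x,y)\to{\cal A}(ux,uy)$ is a bijection, in four cases. When $x,y\in{\cal A}$, the map is the trivial unwrapping $\langle f,x,y\rangle\mapsto f$. When $x,y\in{\cal B}$, the map is $\langle f,x,y\rangle\mapsto Tf$, a bijection because $T$ is fully faithful. When $x\in{\cal A}$, $y\in{\cal B}$, the map is $\langle f,x,y\rangle\mapsto Tf\circ_{{\cal A}}\eta_{x}$, which is precisely the adjunction isomorphism ${\cal B}(Sx,y)\xrightarrow{\sim}{\cal A}(x,Ty)$. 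When $x\in{\cal B}$, $y\in{\cal A}$, the map is $\langle f,x,y\rangle\mapsto\eta_{y}^{-1}\circ_{{\cal A}}Tf$, which factors as the fully faithful $T:{\cal B}(x,Sy)\to{\cal A}(Tx,TSy)$ followed by the isomorphism $(\eta_{y}^{-1})_{*}:{\cal A}(Tx,TSy)\xrightarrow{\sim}{\cal A}(Tx,y)$, and is therefore a bijection. The argument for $v$ is parallel: two of the four cases reduce to the bare unwrapping $\langle f,x,y\rangle\mapsto f$ (when $x$ or $y$ already lies in ${\cal B}$ in the mixed situations), the $x,y\in{\cal B}$ case is trivial, and the $x,y\in{\cal A}$ case uses full faithfulness of $S$.

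The only step that is not essentially formal is verifying that the formulas for $u$ in the two mixed cases really are the adjunction bijections; that is the heart of the proof, but it amounts to recognizing the standard mate correspondence and observing that invertibility of $\eta$ converts one half of it into a bijection into ${\cal A}(Tx,y)$ rather than ${\cal A}(Tx,TSy)$. No further obstacle arises, since functoriality of $u$ and $v$ has already been established in the preceding proposition.
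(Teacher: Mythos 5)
Your proposal is correct and follows essentially the same route as the paper: surjectivity on objects is read off from the definition, and fullness plus faithfulness are established by showing each hom-function is a bijection via the same four-case analysis, using full faithfulness of $S$ and $T$ and the adjunction transpose $f\mapsto Tf\circ_{{\cal A}}\eta_{x}$ in the mixed cases. Your factorization of the $x\in{\cal B},\,y\in{\cal A}$ case through $T$ followed by post-composition with $\eta_{y}^{-1}$ is a harmless cosmetic variant of the paper's appeal to the left adjunct.
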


\begin{proof} It's trivial by definitions that $u,v$ are surjective on objects. So we check fullness and faithfulness.
\begin{itemize}
 \item $u$ is full and faithful
 \begin{itemize}
 \item $x,y \in {\cal A}$, \\
 $u:{\bf Hom}(x,y)=\{ \langle f,x,y \rangle \mid f \in {\cal A}(x,y) \} \ni \langle f,x,y \rangle \mapsto f \in {\cal A}(x,y)$ is bijective.
 \item $x,y \in {\cal B}$, \\
 $T:{\cal B}{\rm (}x,y{\rm )} \rightarrow {\cal A}{\rm (}Tx,Ty{\rm )}$ is bijective. Therefore $u: {\bf Hom}{\rm (}x,y{\rm )}=\{ \langle f,x,y \rangle \mid f \in {\cal B}(x,y) \} \ni \langle f,x,y \rangle \mapsto f \in {\cal A}(x,y) \ni \langle f,x,y \rangle \mapsto Tf \in {\cal A}{\rm (}Tx,Ty{\rm )}={\cal A}{\rm (}ux,uy{\rm )}$ is bijective.
 \item $x \in {\cal A}, y \in {\cal B}$, \\
${\cal B}{\rm (}Sx,y{\rm )} \ni f \mapsto Tf \circ_{{\cal A}} \eta_{x} \in {\cal A}{\rm (}x,Ty{\rm )} $ is the right adjunct of each $f$, and bijective. Therefore $u: {\bf Hom}{\rm (}x,y{\rm )}=\{ \langle f,x,y \rangle \mid f \in {\cal B}(Sx,y) \} \ni \langle f,x,y \rangle \mapsto Tf \circ_{{\cal A}} \eta_{x} \in {\cal A}{\rm (}x,Ty{\rm )}={\cal A}{\rm (}ux,uy{\rm )}$ is bijective.
 \item $x \in {\cal B} , y \in {\cal A}$, \\
${\cal B}{\rm (}x,Sy{\rm )} \ni f \mapsto \eta_{y}^{-1} \circ_{{\cal A}} Tf \in {\cal A}{\rm (}Tx,y{\rm )}$ is the left adjunct of each $f$, and bijective. Therefore $u: {\bf Hom}{\rm (}x,y{\rm )}=\{ \langle f,x,y \rangle \mid f \in {\cal B}(x,Sy) \} \ni \langle f,x,y \rangle \mapsto \eta_{y}^{-1} \circ_{{\cal A}} Tf \in {\cal A}{\rm (}Tx,y{\rm )}={\cal A}{\rm (}ux,uy{\rm )}$
 \end{itemize}
 
 \item $v$ is full and faithful
 \begin{itemize}
 \item $x,y \in {\cal A}$, \\
$S:{\cal A}{\rm (}x,y{\rm )} \rightarrow {\cal B}{\rm (}Sx,Sy{\rm )}$ is bijective. Therefore $v:{\bf Hom}{\rm (}x,y{\rm )}=\{ \langle f,x,y \rangle \mid f \in {\cal A}(x,y) \} \ni \langle f,x,y \rangle \mapsto Sf \in {\cal B}{\rm (}Sx,Sy{\rm )}={\cal B}{\rm (}vx,vy{\rm )}$ is bijective.
 \item $x,y \in {\cal B}$, \\
$v:{\bf Hom}{\rm (}x,y{\rm )}=\{ \langle f,x,y \rangle \mid f \in {\cal B}(x,y) \} \ni \langle f,x,y \rangle \mapsto f \in {\cal B}{\rm (}x,y{\rm )}={\cal B}{\rm (}vx,vy{\rm )}$ is bijective.
 \item $x \in {\cal A}, y \in {\cal B}$, \\
$v:{\bf Hom}{\rm (}x,y{\rm )}=\{ \langle f,x,y \rangle \mid f \in {\cal B}(Sx,y) \} \ni \langle f,x,y \rangle \mapsto f \in {\cal B}{\rm (}Sx,y{\rm )}={\cal B}{\rm (}vx,vy{\rm )}$ is bijective.
 \item $x \in {\cal B} , y \in {\cal A}$, \\
$v:{\bf Hom}{\rm (}x,y{\rm )}=\{ \langle f,x,y \rangle \mid f \in {\cal B}(x,Sy) \} \ni \langle f,x,y \rangle \mapsto f \in {\cal B}{\rm (}x,Sy{\rm )}={\cal B}{\rm (}vx,vy{\rm )}$ is bijective.
 \end{itemize}
 
\end{itemize}
\end{proof}

\begin{theorem}
Let ${\cal A}$ and ${\cal B}$ be categories. ${\cal A}$ is equivalent to ${\cal B}$ 
if and only if ${\cal A}$ is span equivalent to ${\cal B}$ in ${\bf Cat}$. 
\end{theorem}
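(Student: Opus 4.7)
The plan is to prove the biconditional in two separate directions, exploiting the equivalence fusion construction for the nontrivial implication.

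For the forward direction, I would begin with an ordinary equivalence between ${\cal A}$ and ${\cal B}$ and upgrade it to an adjoint equivalence $\langle S,T,\eta,\epsilon\rangle$ with both $\eta$ and $\epsilon$ natural isomorphisms. This upgrading is standard and is exactly what the equivalence fusion requires, since the composition rule in Definition 4.2 for the case $x\in{\cal A},y\in{\cal B},z\in{\cal A}$ invokes $\eta^{-1}$ and thus only makes sense when $\eta$ is invertible. With the adjoint equivalence in hand, I would then simply exhibit the triple $\langle {\cal A}\sqcup \hspace{-.76em} \mid {\cal B}, u, v\rangle$ as the witness: Proposition 4.3 shows that ${\cal A}\sqcup \hspace{-.76em} \mid {\cal B}$ is a category, Proposition 4.5 shows that $u$ and $v$ are functors, and Proposition 4.6 shows they are surjective on objects, full, and faithful, which is exactly the definition of span equivalence in ${\bf Cat}$.

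For the converse, suppose $\langle {\cal C},u,v\rangle$ witnesses that ${\cal A}$ and ${\cal B}$ are span equivalent. Each of $u:{\cal C}\to{\cal A}$ and $v:{\cal C}\to{\cal B}$ is surjective on objects (hence essentially surjective), full, and faithful, and the classical theorem says any such functor is an equivalence of categories. Choosing a quasi-inverse $u^{\ast}:{\cal A}\to{\cal C}$ and forming $v\circ u^{\ast}:{\cal A}\to{\cal B}$ produces an equivalence; alternatively, one simply invokes 2-out-of-3 for equivalences in the cospan ${\cal A}\simeq{\cal C}\simeq{\cal B}$.

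The main obstacle lies in the forward direction, namely constructing an intermediate category whose object set is literally ${\bf Ob}({\cal A})\sqcup{\bf Ob}({\cal B})$ (so that the projections are on-the-nose surjective on objects, not just essentially surjective). That construction is the equivalence fusion, and the bulk of the work — the lengthy case analysis for associativity, functoriality of $u,v$, and their fullness and faithfulness — has already been discharged in Propositions 4.3, 4.5, and 4.6. Consequently the proof of the theorem itself will be short: apply adjointification, then quote the three preceding propositions for one direction and the classical characterisation of equivalences for the other.
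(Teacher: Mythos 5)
Your proposal is correct and follows essentially the same route as the paper: adjointify the equivalence, invoke the equivalence fusion ${\cal A}\sqcup \hspace{-.76em} \mid {\cal B}$ with its projections (Propositions 4.3, 4.5, 4.6) for the forward direction, and for the converse observe that a surjective-on-objects, full and faithful functor is an equivalence and compose across the span. Your remarks on why adjointification is needed and on the quasi-inverse/2-out-of-3 step merely make explicit what the paper leaves implicit.
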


\begin{proof} Let ${\cal A}$ be equivalent to ${\cal B}$, then ${\cal A}$ is adjoint equivalent to ${\cal B}$. Thus there exists a adjoint equivalence between ${\cal A}$ and ${\cal B}$. So we can construct the equivalence fusion and the projections. By Propositions, they are span equivalence in ${\bf Cat}$. Therefore ${\cal A}$ is span equivalent to ${\cal B}$. \\
On the other hand, let ${\cal A}$ be span equivalent to ${\cal B}$ in ${\bf Cat}$. Then there exists a span equivalence $\langle {\cal C},u,v \rangle$ between ${\cal A}$ and ${\cal B}$, and ${\cal C}$ is equivalent to both ${\cal A}$ and ${\cal B}$. Therefore ${\cal A}$ is equivalent to ${\cal B}$.
\end{proof}

\begin{remark}
Let ${\cal A}$ be presheaf category. The forgetful functor
\[
U: {\cal A} \mathchar`- {\bf Cat} \longrightarrow {\cal A} \mathchar`- {\bf Gph}
\]
is monadic. (Proposition F 1.1 in [Leinster 2004])
\end{remark}
\noindent
Let ${\cal A} = {\bf Set}$, we can see ${\bf Set} \mathchar`- {\bf Cat} = {\bf Cat}$, ${\bf Set} \mathchar`- {\bf Grp} = 1 \mathchar`- {\bf GSet}$, and the induced monad $T_1$ is the free strict $1$-category monad on $1 \mathchar`- {\bf GSet}$. By the remark, the comparison functor 
\[
N: {\bf Cat} \longrightarrow T_1 \mathchar`- {\bf Alg}
\]
is isomorphic and arrow part of the functor is
\[
N: f \longmapsto Uf .
\]
Moreover, the category ${\bf Wk} \mathchar`- 1 \mathchar`- {\bf Cat}$ of Leinster's weak 1 categories is the category $T_1 \mathchar`- {\bf Alg}$ of algebras for the monad for details,  refer to the proof of Theorem 9.1.4 in [Leinster 2004].
So the isomorphism $N: {\bf Cat} \rightarrow {\bf Wk} \mathchar`- 1 \mathchar`- {\bf Cat}$ preserve surjectivity, fullness and faithfullness. Hence,

\begin{proposition}
Let $N: {\bf Cat} \rightarrow {\bf Wk \mathchar`- 1 \mathchar`- Cat}$ be the isomorphism above. let ${\cal A}$ and ${\cal B}$ be categories. ${\cal A}$ is span equivalent to ${\cal B}$ in ${\bf Cat}$ if and only if $N({\cal A})$ is span equivalent to $N({\cal B})$ in ${\bf Wk \mathchar`- 1 \mathchar`- Cat}$.
\end{proposition}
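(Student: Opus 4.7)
The plan is to use the isomorphism $N:{\bf Cat} \to {\bf Wk}\mathchar`-1\mathchar`-{\bf Cat}$ to translate spans in one category directly to spans in the other, and then observe that the conditions ``surjective on 0-cells, full on 1-cells, faithful on 1-cells'' for $T_1$-algebra maps are literally the same as ``surjective on objects, full, faithful'' for functors, once one reads off the underlying 0-cell and 1-cell maps. Because $N$ is an isomorphism of categories, taking images and preimages of the whole span diagram under $N$ is harmless; the content is entirely in matching the three defining properties on each leg.

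Concretely, I would first unpack Definition~3.1 specialized to $n=1$. A map $f$ of 1-globular sets is surjective on $0$-cells iff its object component is surjective; is full on $1$-cells iff the induced map $\mathbf{Hom}_X(x,x') \to \mathbf{Hom}_Y(f_0(x),f_0(x'))$ is surjective for every pair of $0$-cells $x,x'$; and is faithful on $1$-cells iff that same induced map is injective. These are exactly the categorical notions of being surjective on objects, full, and faithful. Moreover, by the remark preceding the proposition, $Nf = Uf$, so the underlying map of $1$-globular sets of $Nf$ coincides (on $0$-cells with the object function of $f$ and on $1$-cells with the morphism function of $f$). Hence $Nf$ satisfies the three conditions as a $T_1$-algebra map if and only if $f$ satisfies the three conditions as a functor.

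For the forward direction, suppose $\langle \mathcal{C}, u, v\rangle$ is a span equivalence in ${\bf Cat}$ between $\mathcal{A}$ and $\mathcal{B}$. Apply $N$ to obtain a span $\langle N\mathcal{C}, Nu, Nv\rangle$ in $T_1\mathchar`-{\bf Alg} = {\bf Wk}\mathchar`-1\mathchar`-{\bf Cat}$ between $N\mathcal{A}$ and $N\mathcal{B}$; by the previous paragraph $Nu$ and $Nv$ are surjective on $0$-cells, full on $1$-cells, and faithful on $1$-cells, so this is a span equivalence in ${\bf Wk}\mathchar`-1\mathchar`-{\bf Cat}$. For the reverse direction, given a span equivalence $\langle Z, u', v'\rangle$ between $N\mathcal{A}$ and $N\mathcal{B}$, apply $N^{-1}$ to get a span $\langle N^{-1}Z, N^{-1}u', N^{-1}v'\rangle$ in ${\bf Cat}$ between $\mathcal{A}$ and $\mathcal{B}$; the same identification of the underlying maps shows that $N^{-1}u'$ and $N^{-1}v'$ are surjective on objects, full, and faithful, giving a span equivalence in ${\bf Cat}$.

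There is essentially no obstacle beyond the bookkeeping of matching Definition~3.1 (specialized to $n=1$) against the classical notions; the only subtle point to state clearly is that $N$ preserves and reflects each of the three properties because it acts as the identity on the underlying globular data of a category, as recorded in the remark. Given this, the equivalence of the two span-equivalence notions is immediate from $N$ being an isomorphism.
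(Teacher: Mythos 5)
Your proposal is correct and follows essentially the same route as the paper: the paper's (very terse) justification is precisely that $N$ is an isomorphism acting as $U$ on arrows, hence preserving and reflecting surjectivity on objects, fullness and faithfulness, so spans transfer in both directions. Your more explicit unpacking of Definition~3.1 at $n=1$ just makes that bookkeeping visible.
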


\noindent
As a result of Proposition 4.7 and Proposition 4.9, we obtain the following theorem:

\begin{theorem}
${\cal A}$ is equivalent to ${\cal B}$ if and only if $N({\cal A})$ is span equivalent to $N({\cal B})$ in ${\bf Wk \mathchar`- 1 \mathchar`- Cat}$.
\end{theorem}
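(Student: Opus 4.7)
The plan is to chain together the two biconditionals established just before this theorem, namely Proposition 4.7 and Proposition 4.9. There is essentially no new content to verify: the entire work has already been done in setting up the equivalence fusion construction (which powers Proposition 4.7) and in identifying ${\bf Cat}$ with $T_1\mathchar`-{\bf Alg}$ via the monadic comparison functor $N$ (which powers Proposition 4.9).

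Concretely, I would argue as follows. First, apply Proposition 4.7 to replace the classical notion of equivalence between ${\cal A}$ and ${\cal B}$ by span equivalence in ${\bf Cat}$; this is the nontrivial half, and it was established via the adjoint equivalence, the construction of ${\cal A}\sqcup \hspace{-.76em} \mid {\cal B}$, and the projections $u,v$ which were shown to be surjective on objects, full, and faithful. Then apply Proposition 4.9 to transport span equivalence in ${\bf Cat}$ to span equivalence in ${\bf Wk}\mathchar`-1\mathchar`-{\bf Cat}$ along the isomorphism $N$, using the fact that $N$ preserves (and reflects) surjectivity, fullness, and faithfulness because these conditions are detected on the underlying $1$-globular sets and $N$ acts as $Uf$ on morphisms.

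Both directions are immediate from this chain of biconditionals, so the proof reduces to one line invoking the two propositions in sequence. The main conceptual obstacle — showing that the classical categorical equivalence can be realised by a symmetric span whose legs are surjective-on-objects, full, and faithful functors — has already been overcome in the construction of the equivalence fusion, and the translation into the globular-operadic setting is purely formal through $N$. Hence I do not anticipate any genuine difficulty in writing up the proof of Theorem 4.10 itself.
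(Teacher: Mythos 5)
Your proposal matches the paper exactly: Theorem 4.10 is stated there as an immediate consequence of chaining Proposition 4.7 (classical equivalence iff span equivalence in ${\bf Cat}$) with Proposition 4.9 (transport along the isomorphism $N$), with no further argument. Your one-line composition of the two biconditionals is precisely the intended proof.
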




\vspace{5mm}
\noindent
Division of Science (mathematics), \\
Graduate School, Kyoto Sango University, \\
Kyoto 603-8555, Japan \\
E-mail: i1655059@cc.kyoto-su.ac.jp


\end{document}